\newtheorem{theorem}{Theorem}[section]
\begin{document}

\baselineskip 16pt
\title{On finite groups with elements of prime power orders\thanks{This paper is completed in July 1981, under
the supervision by Professor Zhongmu Chen in Southwest Normal
University. Contact address: shiwujie@outlook.com }}

\author{{Wujie Shi, \ \ Wenze Yang}\\
{\small Southwest Normal University, Chongqing, China}}
\date{}
\maketitle
\begin{abstract} This paper is published in Journal of Yunnan Education College, no.1(1986), p.2-10. (Internal Version) and written in Chinese. Translate it to English is helpful for the citing some conclusions in this paper.\\
{\bf Keywords:} Finite groups, EPPO-groups, solvable groups, simple groups\\
{\bf AMS Mathematics Subject Classification(2010):} 20D05, 20D10,
20D20.
\end{abstract}

\noindent All groups considered are finite.

G. Higman in \cite{Hig} studied the groups in which every element
has prime power order.  For convenience, we call them  EPPO-groups.
Later, Feit, Hall and Thompson et al investigated  the structure of
$CN$-groups, which is more extensive than EPPO-groups and firstly
concluded that the order of a non-solvable $CN$-group is even (see
\cite{FHT}). Suzuki in \cite{Su} studied the class of $CIT$-groups,
which is wider than the class of EPPO-groups, and in which the order
of the centralizer of an element with order 2 is a $2$-group.
Suzuki's paper \cite{Su} is very profound, which almost classified
the non-solvable $CIT$-groups and proved the equivalence of
non-solvable  $CN$-groups and non-solvable $CIT$-groups. Since the
order of a $CIT$-group is even, the EEPO-groups, in particular the
solvable EPPO-groups of odd orders, are not contained the class of
$CIT$-groups.

In this paper, we continue discussing the structure of groups in
which every element has prime power order and independently obtain
some more detailed results than Higman's results in \cite{Hig}. In
addition, by Suzuki's results in \cite{Su}, we determine the types
of non-solvable EPPO-groups, which imply an interesting result, that
is, the smallest non-abelian simple group $A_5$ could be
characterized only by its two `orders', that is, the characteristic property of $A_5$ is: (1) The order of group contains at least three prime factors, and (2) the order of every nonidentity element is prime.

Throughout this paper, we denote by $\pi(G)$ the set of all primes
dividing the order of a group $G$ and denote by $G_p$ a Sylow
$p$-subgroup of $G$ for some $p\in \pi(G)$.

\section{General Porperties}

The following observation is clear.
\begin{theorem}
The subgroups and quotients of an EPPO-group are also EPPO-groups.
\end{theorem}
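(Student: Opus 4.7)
The plan is to verify the two assertions separately, each by a short direct argument based on nothing more than the definition of the order of an element.

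For the subgroup case, I would simply observe that if $H \leq G$ and $x \in H$, then the order of $x$ computed in $H$ equals the order of $x$ computed in $G$. Hence, assuming $G$ is an EPPO-group, every $x \in H$ has prime power order, so $H$ is an EPPO-group.

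For the quotient case, let $N \trianglelefteq G$ and pick any coset $gN \in G/N$. The key fact I would invoke is that the order of $gN$ in $G/N$ divides the order of $g$ in $G$. Since $G$ is EPPO, the order of $g$ is $p^k$ for some prime $p$ and integer $k \geq 0$; consequently the order of $gN$ is a divisor of $p^k$, hence again a power of $p$ (possibly $p^0 = 1$). Thus every element of $G/N$ has prime power order.

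Since both parts reduce to standard facts about element orders in subgroups and quotients, there is no real obstacle here; the proof is purely verificational, which is why the authors describe the observation as ``clear'' and offer no argument in the text.
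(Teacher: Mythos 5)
Your proof is correct and is exactly the standard argument the paper has in mind when it calls the observation ``clear'': element orders are preserved in subgroups, and the order of $gN$ in $G/N$ divides the order of $g$, so a divisor of a prime power is again a prime power. Nothing further is needed.
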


\begin{theorem}
Let $G$ be a group. then the following statements are equivalent:

$\mathrm{(1)}$ $G$ is an EPPO-group.

$\mathrm{(2)}$ For any non-trivial $x$  and $y$ in $G$ with $(|x|,
|y|)=1$, $xy\neq yx$.

$\mathrm{(3)}$ For any non-trivial $x$  and $y$ in $G$ with $(|x|,
|y|)=1$, $C_G(x)\cap C_G(y)=1$.

$\mathrm{(4)}$ For any non-trivial $p$-subgroup $A\leq G$,
$C_G(A)\leq P$, where $p$ ia a prime dividing the order of $G$ and
$P$ is a Sylow $p$-subgroup of $G$.
\end{theorem}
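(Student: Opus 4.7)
The plan is to prove the four conditions equivalent by a cycle $(1) \Rightarrow (2) \Rightarrow (3) \Rightarrow (4) \Rightarrow (1)$. The key structural tool throughout will be the canonical prime-power decomposition: every nonidentity element $z$ of a finite group factors uniquely as a product of commuting prime-power-order pieces $z = \prod_p z_p$, with each $z_p$ a power of $z$. This lets me transfer commutation relations of $z$ to its individual $p$-components, which is the one nontrivial ingredient needed in several implications.

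For $(1) \Rightarrow (2)$ I will simply observe that commuting nontrivial $x, y$ of coprime orders would produce $xy$ of order $|x|\,|y|$, which has at least two distinct prime divisors, contradicting EPPO. For $(2) \Rightarrow (3)$, I will take a nontrivial $z \in C_G(x) \cap C_G(y)$, pick a prime $p$ dividing $|z|$, and use coprimality of $|x|$ and $|y|$ to ensure $p$ misses at least one of them, say $|x|$; the $p$-component $z_p$ of $z$ then commutes with $x$ (being a power of $z$) while having order coprime to $|x|$, violating $(2)$.

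For $(3) \Rightarrow (4)$ my plan is to show that $C_G(A)$ is in fact a $p$-group. Given any $y \in C_G(A)$ and $a \in A \setminus \{1\}$, the $p'$-part $y_{p'}$ commutes with $a$ and has order coprime to $|a|$, so $(3)$ applied to the pair $(a, y_{p'})$ forces $y_{p'} = 1$; hence every element of $C_G(A)$ has $p$-power order, and by Cauchy's theorem $C_G(A)$ is a $p$-group contained in some Sylow $p$-subgroup. Finally, for $(4) \Rightarrow (1)$, if some $g \in G$ had order divisible by distinct primes $p \neq q$, then $g_q$ would centralize the nontrivial $p$-subgroup $A = \langle g_p \rangle$, hence by $(4)$ lie inside a Sylow $p$-subgroup, forcing the $q$-element $g_q$ to be trivial: contradiction.

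I do not anticipate a serious obstacle. The most delicate step is $(2) \Rightarrow (3)$, where one must choose the right prime component of $z$ to exhibit a commuting coprime-order pair; the remaining implications are essentially direct once the prime-power decomposition is available.
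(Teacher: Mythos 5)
Your proof is correct and follows essentially the same route as the paper: the cycle $(1)\Rightarrow(2)\Rightarrow(3)\Rightarrow(4)\Rightarrow(1)$ with the same idea at each step. Your systematic use of the commuting prime-power components $z_p$ in fact makes two steps tighter than the paper's terse versions: in $(2)\Rightarrow(3)$ the paper merely asserts that a nontrivial $g$ in $C_G(a)\cap C_G(b)$ has order coprime to $|a|$ or to $|b|$ (which needs the implicit observation that $(2)$ already forces prime-power orders), and in $(4)\Rightarrow(1)$ the paper applies $(4)$ to $\langle g\rangle$, which is not a $p$-subgroup, whereas you correctly apply it to $\langle g_p\rangle$ and derive the contradiction from $g_q$.
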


\begin{proof}
(1)$\Rightarrow$(2): Let $a, b\in G$ such that $|a|=p^{\alpha}$ and
$|b|=q^{\beta}$, where $p\neq q$. If $ab=ba$, then
$|ab|=p^{\alpha}q^{\beta}$, a contradiction because $G$ is an
EPPO-group.

(2)$\Rightarrow$(3): Let $a, b\in G$ such that $(|a|, |b|)=1$.
Suppose that $C_G(a)\cap C_G(b)\neq 1$. Then for some $1\neq g\in
C_G(a)\cap C_G(b)$, we have that either $(|g|, |a|)=1$ or $(|g|,
|b|)=1$ by the hypothesis. But since $g$ centralizes both $a$ and
$b$, this yields a contradiction.

(3)$\Rightarrow$(4): Note that $$C_G(A)=\bigcap_{y\in A}C_G(y).$$ If
$A$ is a $p$-group, then for any nontrivial element $y\in A$,
$C_G(y)$ is also a $p$-group by the assumption. Thus, $C_G(A)$ is a
$p$-group by the observation above and consequently there exists a
Sylow $p$-subgroup $P$ of $G$ such that $C_G(A)\leq P$.

Next, we suppose that there exists at least two different primes $p$
and $q$ dividing the order of $A$. Let $a$ be an element of order
$p$ in $A$ and $b$ be an element with order $q$ in $A$. Then
$C_G(a)\cap C_G(b)=1$ by the hypothesis and so $C_G(A)=\bigcap_{y\in
A}C_G(y)=1$, as desired.

(4)$\Rightarrow$(1): If $G$ is not an EPPO-group, then we may assume
that  $G$ contains an element $g$ of order $p^{\alpha}q^{\beta}$
with $\alpha\neq 0$ and $\beta\neq 0$. It is clear that $g\leq
C_G(\langle g\rangle)\neq 1$, a contradiction by the assumption of
(4).
\end{proof}

\begin{theorem}
Let $G$ be a non-cyclic metacyclic $\mathrm{EPPO}$-group such that
$|\pi(G)|\geq 2$. Then $G=\langle a, b\rangle$ such that
$a^{p^{\alpha}}=1$, $b^{q^{\beta}}=1$, $b^{-1}ab=a^r$, where $p$ and
$q$ are different primes such that  the exponent of $r (mod \ p^{\alpha})$  is  ${q^{\beta}}$.
\end{theorem}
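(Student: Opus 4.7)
The plan is to extract explicit generators $a$ and $b$ from the metacyclic structure and then read off the defining relations using Theorem~1.2. By hypothesis there is a cyclic normal subgroup $N=\langle a\rangle\triangleleft G$ with $G/N$ cyclic, and since $G$ is EPPO the generator $a$ has prime power order, say $|a|=p^{\alpha}$. I now choose any $c\in G$ whose coset $cN$ generates $G/N$; because $|c|$ is a prime power and $|cN|$ divides $|c|$, the order $|G/N|$ is itself a prime power, say $q^{\beta}$. If $q=p$, then $|G|=p^{\alpha+\beta}$ and $|\pi(G)|=1$, contradicting the hypothesis, so $q\neq p$.

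Next I want to replace $c$ by an element of $G$ of order exactly $q^{\beta}$. Writing $|c|=q^{\gamma}$ with $\gamma\geq\beta$, the element $c^{q^{\beta}}$ lies in $N$ and has order $q^{\gamma-\beta}$; but every non-identity element of $N$ has order a power of $p\neq q$, so $\gamma=\beta$. Setting $b:=c$ therefore gives $|b|=q^{\beta}$ and $G=\langle a,b\rangle$, and since $\langle a\rangle\triangleleft G$ we have $b^{-1}ab=a^{r}$ for some integer $r$, as required.

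It remains to show that the multiplicative order $d$ of $r$ modulo $p^{\alpha}$ equals $q^{\beta}$. Iterating the conjugation relation gives $b^{-k}ab^{k}=a^{r^{k}}$; setting $k=q^{\beta}$ and using $b^{q^{\beta}}=1$ yields $r^{q^{\beta}}\equiv 1\pmod{p^{\alpha}}$, so $d\mid q^{\beta}$. If $d<q^{\beta}$, then $d\mid q^{\beta-1}$, so the non-identity element $b^{q^{\beta-1}}$ (of $q$-power order) commutes with $a$; as $(|a|,|b^{q^{\beta-1}}|)=(p^{\alpha},q)=1$, this contradicts Theorem~1.2(2). Hence $d=q^{\beta}$.

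The main obstacle is the second step, namely promoting an arbitrary coset representative $c$ to an element of order exactly $q^{\beta}$; this is where the hypotheses $|\pi(G)|\geq 2$ and EPPO are both essential. Once it is done, everything reduces to a routine computation in $(\mathbb{Z}/p^{\alpha}\mathbb{Z})^{\times}$ combined with a single application of the EPPO criterion provided by Theorem~1.2.
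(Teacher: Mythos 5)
Your proof of the stated implication is correct, but it follows a genuinely different route from the paper. The paper first argues that $|G|=p^{\alpha}q^{\beta}$ with every Sylow subgroup cyclic and then invokes the classification of such groups (Hall, Theorem 9.4.3) to obtain the presentation $\langle a,b\mid a^{p^{\alpha}}=1,\ b^{q^{\beta}}=1,\ b^{-1}ab=a^{r}\rangle$ wholesale, after which it uses the EPPO property to show that no $r^{q^{s}}$ with $s<\beta$ can be $\equiv 1\pmod{p^{\alpha}}$. You instead extract the generators directly from the metacyclic data: the cyclic normal subgroup supplies $a$ of order $p^{\alpha}$, the EPPO property forces $|G/N|$ to be a prime power $q^{\beta}$ with $q\neq p$ (via $|\pi(G)|\geq 2$), and the observation that $c^{q^{\beta}}\in N$ has $q$-power order inside a $p$-group promotes the coset representative to an honest element $b$ of order $q^{\beta}$. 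Your final step --- if the order $d$ of $r$ modulo $p^{\alpha}$ were less than $q^{\beta}$ then $b^{q^{\beta-1}}\neq 1$ would commute with $a$, contradicting Theorem 1.2(2) --- is essentially the same computation the paper performs with $b^{q^{s}}$. Your version is more elementary and self-contained (it sidesteps the paper's somewhat delicate claim that $G/G'$ is cyclic and the appeal to the structure theorem for Z-groups), at the cost of not recording the extra arithmetic fact $(p,r-1)=1$ that the paper extracts and later reuses in its sufficiency argument. Note also that the paper treats the theorem as a two-way characterization and proves a converse (any group with such a presentation is a non-cyclic EPPO-group); you prove only the direction literally asserted in the statement, which is a legitimate reading, but you should be aware the authors intended, and proved, more.
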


\begin{proof}
We first prove the necessity.

Let $G$ be a non-cyclic metacyclic EPPO-group. Then $G'$ and $G/G'$
are cyclic and moreover are EPPO-groups by the hypothesis. Therefore
$|G'|=p^{\alpha}$ and $|G/G'|=q^{\beta}$ for some $p, q\in \pi(G)$.
Since $|\pi(G)|\geq 2$, we see that $p\neq q$ and so $G$ is of order
$p^{\alpha}q^{\beta}$ such that every Sylow subgroup of $G$ is
cyclic. By \cite[Theorem 9.4.3]{Hall}, we have that

\begin{gather*}\label{$*$}
G=\langle a, b|a^{p^{\alpha}}=1, b^{q^{\beta}}=1, p\neq q,
b^{-1}ab=a^r, (p, r-1)=1,  r^{q^{\beta}}\equiv 1\pmod{p^{\alpha}}\rangle. \tag{$*$} \\
\end{gather*}

We claim that $p^{\alpha}\mid (r^{q^{\beta}}-1)$. If not, suppose
that for some $0\leq s< \beta$, we have $p^{\alpha}|(r^{q^{s}}-1)$.
Then $b^{-q^{s}}ab^{q^s}=a^{r^s}=a$, which implies that
$ab^{q^s}=b^{q^s}a$, contradicting that $G$ is an EPPO-group. Hence
our claim holds. It follows that $p\nmid r-1$. In fact, if $p\mid
(r-1)$, then we can write $r=kp+1$. Then
$r^{p^{\alpha-1}}=(kp+1)^{p^{\alpha-1}}$, form which we conclude
that $r^{p^{\alpha-1}}\equiv 1\pmod{p^{\alpha}}$. By the argument
above, we get that $q^{\beta}|p^{\alpha-1}$, a contradiction because
$p\neq q$. Thus the necessity of this theorem is proved.

Now we prove the sufficiency. By \cite[Theorem 9.4.3]{Hall}, the
Sylow subgroups of a group $G$ satisfying relation (\ref{$*$}) are
all cyclic and so $G$ is a metacyclic group such that $G=\langle
a\rangle\rtimes \langle b\rangle$. Let $g=b^xa^y$ be any element of
$G$, where $x$ and $y$ are natural numbers. If $x=0$, then $g$ is an
 element of prime power order. Suppose that $x\neq 0$. Then
 $$(b^xa^y)^{q^{\beta}}=b^{xq^{\beta}}a^{(r^{q^{\beta-1}}+r^{q^{\beta-2}}+\cdots +1)y}.$$
Since $$p^{\alpha}\mid
r^{q^{\beta}}-1=(r-1)(r^{q^{\beta-1}}+r^{q^{\beta-2}}+\cdots +1),$$
and $p\nmid r-1$, we have $$p^{\alpha}\mid
(r^{q^{\beta-1}}+r^{q^{\beta-2}}+\cdots +1),$$ which implies that
$(b^xa^y)^{q^{\beta}}=1$. Thus, $G$ ia an EPPO-group and therefore
$G$ is noncyclic.
\end{proof}


\begin{theorem}
Let $G$ be an $\mathrm{EPPO}$-group and $H$ be a subgroup of $G$
such that $(|H|, d)=1$ for a natural number $d>1$. Then $|H|$
divides the number of elements of order $d$ in $G$.
\end{theorem}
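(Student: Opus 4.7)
The plan is to have $H$ act on the set $\Omega_d = \{g \in G : |g| = d\}$ by conjugation and show every orbit has size exactly $|H|$.

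First I would dispose of a trivial case: since $G$ is an EPPO-group, $\Omega_d$ is empty unless $d$ is a prime power. If $d$ is not a prime power, the count is $0$, which is divisible by $|H|$ vacuously. So assume $d = p^k$ for some prime $p$ and integer $k \geq 1$. The coprimality hypothesis $(|H|, d) = 1$ then forces $p \nmid |H|$.

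Next I would let $H$ act on $\Omega_d$ by conjugation; this is well-defined since conjugation preserves order. The stabilizer of an element $g \in \Omega_d$ under this action is $C_H(g) = H \cap C_G(g)$. Since $\langle g \rangle$ is a nontrivial $p$-subgroup of $G$, Theorem~1.2(4) gives $C_G(g) = C_G(\langle g\rangle) \leq P$ for some Sylow $p$-subgroup $P$ of $G$, so $C_G(g)$ is a $p$-group. Then $|C_H(g)|$ divides both $|H|$ and a power of $p$; since $p \nmid |H|$, we conclude $C_H(g) = 1$.

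Therefore every $H$-orbit on $\Omega_d$ has size $|H|/|C_H(g)| = |H|$. Summing over orbits yields $|H|\mid |\Omega_d|$, as desired. There is no real obstacle here: the whole proof is a one-line orbit-counting argument once the key input — that centralizers of $p$-elements in an EPPO-group are $p$-groups — is supplied by Theorem~1.2(4); the only thing to be careful about is remembering to handle the case when $d$ is not a prime power separately so that the coprimality hypothesis can be converted into the single useful statement $p \nmid |H|$.
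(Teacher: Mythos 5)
Your proof is correct and follows essentially the same route as the paper: let $H$ act on the set of elements of order $d$ by conjugation, show each stabilizer $C_H(a)$ is trivial via Theorem~1.2, and conclude that every orbit has size $|H|$. You simply spell out the triviality of the stabilizer (reducing to $d$ a prime power and using that centralizers of $p$-elements are $p$-groups while $p\nmid |H|$) in more detail than the paper, which just cites Theorem~1.2.
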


\begin{proof}
If $d$ does not divide the order of $G$, then the result is trivial.
Hence we may assume that $d||G|$. let $\Omega$ denote the set of
elements of order $d$ in $G$. Take an element $a\in \Omega$ and let
$C_1$ denote the orbit of $a$ by the conjugate action of $H$ on
$\Omega$. By Theorem 1.2, we have that $N_H(a)=C_H(a)=1$ and so
$|C_1|=|H:C_H(a)|=|H|$ by \cite[Theorem 1.6.1]{Hall}. Let $b\in
\Omega\setminus C_1$. Then we have other $H$-orbit $C_2$ of size
$|H|$. Continuing like this, we have that
$\Omega=\bigcup_{i=1}^tC_i$, where each $C_i$ is an $H$-orbit of
size $H$. It follows that $|H|\mid |\Omega|$, implying the result.
\end{proof}

\noindent{\bf Remark 1.}  After 30 years, we find that Theorem 1.4 is also the characteristic
property of an $\mathrm{EPPO}$-group. See [A.A. Buturlakin, Rulin Shen and Wujie Shi, Siberian Math. J., 58, no.3(2017), 405-407.].

\noindent {\bf Corollary 1.1.} Let $G$ be an EPPO-group and $N$ be a
normal subgroup of $G$. Then $$\prod p^{\alpha}\mid (|N|-1),$$ where
$p^{\alpha}$ is the $p$-part of $|G|$ and $p\notin \pi(N)$.

\section{Solvable EPPO-groups}

\noindent {\bf Lemma 2.1.} \emph{Let $G$ be a non-abelian solvable
EPPO-group. Then $G$ has a normal abelian subgroup which is not
contained the center of $G$.}

\noindent {\bf Theorem  2.1.} \emph{A solvable EPPO-group $G$ is an
$M$-group. That is, the every irreducible representation of $G$ is a monomial representation}
\begin{proof}
This proof is similar to [5, Theorem 16].
\end{proof}

\noindent {\bf Lemma 2.2.} \emph{Let $N$ be an elementary abelian
group of order $q^m$ and $H$ be an EPPO-group such that $(|H|,
|N|)=1$. Suppose that $H$ acts on $N$ and denote by $G=N\rtimes H$
the semidirect product of $N$ by $H$. Then $G$ is an EPPO-group if
and only if $H$ acts faithfully on $N$, and for any non-trivial
element $h\in H$, $1$ is not an   eigenvalue of $h$ as a linear
transformation on $N$.}

\begin{proof}
We first prove the necessity. Suppose that $G$ is an EPPO-group. Let
$h$ and $a$ be non-trivial elements in $H$ and $N$ respectively.
Then $a^h=h^{-1}ah$ by the definition of semidirect. If $h$ has a
characteristic root $1$, then there exists an element $1\neq a_1\in
N$ such that $a_1^h=1\cdot a_1=a_1$ and therefore $a_1h=ha_1$, a
contradiction. In addition, it is easy to see that $H$ acts
faithfully on $N$.

Now we prove the sufficiency.

 By the hypothesis, $H$ is an EPPO-group and so $|h|=p^{\alpha}$ for some
$p\in \pi(H)$. Observing that $h$ is a linear transformation on $N$,
$h$ is a root of the polynomial $\lambda^{p^{\alpha}}-1$. Assume
that $h\neq 1$. Since $1$ is not an   eigenvalue of $h$ as a linear
transformation on $N$, the minimal polynomial of $h$  is
$\lambda^{p^{\alpha}-1}+\lambda^{p^{\alpha}-2}+\cdots+1$. Let $ha$
be any element in $G$ with $h\in H$ and $a\in N$. If $h=1$, then
$(ha)^q=1$. If $h\neq 1$ and $h^{p^{\alpha}}=1$, then
$$(ha)^{p^{\alpha}}=h^{p^{\alpha}}\cdot a^{[h^{p^{\alpha}-1}+h^{p^{\alpha}-2}+\cdots+1]}=1\cdot a^0=1,$$
which shows that $G$ is an EPPO-group.
\end{proof}

\noindent {\bf Lemma 2.3.} \emph{Let $M$ be an $n\times n$ monomial
matrix and $M^p=I_n$ for a prime $p$. If $M$ is not a diagonal
matrix, then $1$ is an eigenvalue of $M$.}

\begin{proof}
Let $M$ be an $n\times n$ monomial matrix. Since $M^p=I_n$, we have
that $|M|\neq 0$ and so in each row and column of $M$ there is
exactly one non-zero element. Denote by $a_{i\sigma(i)}$ the
non-zero element of $M$ in the \emph{i}th row, where $\sigma$ is a
permutation on the set $\{1, 2, \cdots, n\}$ and the order of
$\sigma$ is $p$ by the hypothesis. Therefore $\sigma=\sigma_1
\sigma_2\cdots \sigma_s$, where every $\sigma_i$ is a cyclic
permutation of order $p$. Since $M$ is not a diagonal matrix, there
exists at least one non-trivial $\sigma_i$ and without loss of
generality, one can assume that $\sigma_1=(12\cdots p)$ and so

\begin{equation*}
M=\left(
  \begin{array}{cc}
    C & 0\\
    0 & *\\
  \end{array}
\right),
\end{equation*}
where
\begin{equation*}
C=
 \left(
  \begin{array}{cccccc}
    0 & a_{12} & 0 & \cdots & 0 & 0\\
    0 & 0      & a_{23} & \cdots &0  & 0 \\
    \vdots & \vdots & \ddots\\
    0 & 0 & 0 & \cdots & 0 & a_{p-1, p}\\
    a_{p1}
  \end{array}
\right).
\end{equation*}
Since $M^p=I_n$, we have that $C^p=I_n$, which induces that
$a_{12}a_{23}\cdots a_{p-1,p}a_{p1}=1$. Thus

\begin{align*}
|I-C| &=
 \left |
  \begin{array}{cccccc}
    1 & -a_{12} & & &  & \\
     &     1 & -a_{23} &  &  &  \\
    \vdots & \vdots & \ddots\\
     &  &  &  & 1 & -a_{p-1, p}\\
    -a_{p1}
  \end{array}
\right |\\
&=1-a_{12}a_{23}\cdots a_{p-1,p}a_{p1}=0.
\end{align*}
It follows that $1$ is an eigenvalue of $C$ and is also an
eigenvalue of $M$.
\end{proof}

Combining Lemma 2.2 and Lemma 2.3, we get the following result.

\noindent {\bf Theorem  2.2.} \emph{Let $G$ be an EPPO-group with a
non-trivial normal $q$-subgroup $Q$ for a prime $q$. Let $H$ be a
solvable subgroup of $G$ such that $(|H|, q)=1$. Then $H$ is either
a cyclic $p$-group or a generalized quaternion group. In particular,
if $p\neq q$, the Sylow $p$-subgroups of $G$ are cyclic or
generalized quaternion and furthermore, if $G$ is solvable, the
order of $G$ is $p^{\alpha}q^{\beta}$ for some positive integers
$\alpha$ and $\beta$.}

\begin{proof}
Let $N$ be a minimal $H$-invariant subgroup contained in $Q$. Then
$N$ is an elementary $q$-group. Since $G$ is an EPPO-group, we have
that $H$ is also an EPPO-group by Theorem 1.1. It follows that the
representation of $H$ on $N$ is a monomial representation. Let $h\in
H$ and let $h$ be of prime order. We claim that the representation
matrix of  $h$ is diagonal. If not, by Lemmas 2.2 and 2.3, we obtain
that $G$ is not an EPPO-group, contradicting our assumption on $G$.
Thus, if $h$ and $k$ are two elements in $H$ of prime orders, then
$hk=kh$. Hence $H$ must be a $p$-group some prime $p$ since $H$ is
an EPPO-group. Furthermore, by \cite[Theorem 7.24]{Kur}, we conclude
that $H$ is a cyclic group or a generalized quaternion group.

Suppose that $G$ is solvable. Then, for prime $q$,  $G$ has a Hall
$q'$-subgroup $H$ and consequently by the foregoing arguments, $H$
must be a $p$-group for some $p\in \pi(G)$. Therefore
$|G|=p^{\alpha}q^{\beta}$.
\end{proof}

\noindent {\bf Theorem  2.3.} \emph{Let $G$ be an EPPO-group. Then
the following statements hold.}

(1) \emph{If $G$ has a generalized quaternion Sylow $2$-subgroup,
then $G$ is of order $2^{\alpha}q^{\beta}$ with $\beta\geq 0$ and
the Sylow $q$-subgroup of $G$ is normal in $G$.}

(2) \emph{If $G$ has a non-trivial normal $q$-subgroup $Q$ for a prime $q$,
then $G$ is solvable provided one of the following holds.}

(i) \emph{$q$ is an odd  prime.}

(ii) \emph{$Q$ is the Sylow $q$-subgroup of $G$. }

(iii)\emph{ $G$ has an abelian Sylow $2$-subgroup.}

\begin{proof}
(1) By the hypothesis and \cite{BS}, we have that there exists a
normal subgroup of $N$ of odd order such that $G/N$ has a central
element of order $2$. Since $G/N$ is an EPPO-group, $G$ must be a
$2$-group. By the oddness of $N$, we know that $G$ is solvable. By
Theorem 2.2, the order of $G$ is $2^{\alpha}q^{\beta}$ and $N$ is a
normal Sylow $q$-subgroup in $G$.

(2) Let $|G|=p_1^{\alpha_1}p_2^{\alpha_2}\cdots
p_s^{\alpha_s}q^{\beta}$. By (1), we may assume that the Sylow
$2$-subgroups of $G$ are not generalized quaternion groups.

(i) Suppose that $q$ is an odd prime. If $p_i\neq 2$ for $1\leq
i\leq s$, then $G$ is of odd order and so $G$ is solvable. If
$p_i=2$ for some $i$, then by Theorem 2.2, the Sylow $2$-subgroups
of $G$ are cyclic. It follows from Burnside's Theorem (see
\cite[Theorem 14.3.1]{Hall}) that $G$ has a normal $2$-complement
and so $G$ is solvable.

(ii) If $Q$ is a Sylow $q$-subgroup of $G$, then $Q$ is a normal
Sylow $q$-subgroup of $G$. By Theorem 2.2, we have that every Sylow
$p$-sbgroup of $G$ is cyclic for $p\neq q$. Thus, $G/Q$ is a
metacyclic group and therefore $G$ is solvable.

(iii) Let $P$ be a Sylow $2$-subgroup of $G$ and suppose that $P$ is
abelian. If $q\neq 2$, then the result follows from case (i). If
$q=2$, then $C_G(Q)$ is a Sylow $2$-subgroup of $G$ by the
hypothesis on $G$. Since $C_G(Q)$ is normal in $G$, we obtain that
$G$ is solvable.

Thus, the proof is complete.
\end{proof}

\noindent {\bf Lemma 2.4.} \emph{Let $G$ be a group of order
$p^{\alpha}q^{\beta}$, $P$ be a Sylow $p$-subgroup of $G$ and $Q$ be
a Sylow $q$-subgroup of $G$. Suppose that $P$ is cyclic and $Q$ is
minimal normal in $G$. If $C_G(Q)=Q$, then $\beta$ is the exponent of $q (mod \ p^{\alpha})$.
Especially, if $G$ is a EPPO-group, then $C_G(Q) = Q$ is satisfied.}

\begin{proof}
Since $Q$ is an elementary abelian group of order $q^{\beta}$ and
$G=PQ$, we have that $Q$ is also a minimal $P$-invariant subgroup by
the minimality of $Q$. Then $P$ acts irreducibly and faithfully on
$Q$. Set $P=\langle a\rangle$. Suppose that under some basis of $Q$,
the representation matrix of $a$ is $A$. By \cite[Ch.3, Theorem
2]{Jac}, we know that the characteristic  polynomial of $A$ is equal
to its minimum polynomial. Since the order of $a$ is $p^{\alpha}$,
we have that $A^{p^{\alpha}}=I_{\beta}$, which implies that the
characteristic polynomial $f(x)$ of $A$ divides $x^{p^{\alpha}}-1$,
where $f(x)\in \mathbb{F}_q[x]$ and $\mathbb{F}_q$ denotes the
finite field with $q$ elements. Let $k(q^{\beta})$ denote the
splitting field of $f(x)$ over $\mathbb{F}_q$. Then all
characteristic roots of $f(x)$ are $p^{\alpha}$th roots of unity in
$k(q^{\beta})$ and $f(x)$ has no multiple root in $k(q^{\beta})$. We
claim that $f(x)$ is irreducible over $\mathbb{F}_q$. If not,
suppose that $f(x)=f_1(x)f_2(x)$, then it is easy to see that $A$ is
similar to the following block matrix
\begin{equation*}
\left(
  \begin{array}{cc}
    A_1 & \\
        & A_2\\
  \end{array}
\right),
\end{equation*}
where $f_i(x)$ is the characteristic polynomial for $A_i$,
contradicting the irreducibility of $A$. It follows that $f(x)$ is
irreducible over $\mathbb{F}_q$.

Let $\omega$ be a root of $f(x)$ in $k(q^{\beta})$. Then the number
of conjugate elements with $\omega$ is $\beta$. Let $\zeta$ generate
$k(q^{\beta})$. Then the map $$\phi: \ \zeta \rightarrow \zeta^q$$
generates the automorphism group of $k(q^{\beta})$. It is clear that
$\phi(\omega)=\omega^q$. Since the order of $A$ is $p^{\alpha}$, we
get that the order of $\omega$ is also $p^{\alpha}$. This implies
that the conjugacy class containing $\omega$ consists of
$\omega^{q^0}, \omega^{q^1}, \ldots, \omega^{q^r-1}$ where
$q^r\equiv 1 (\mathrm{mod}  \ p^{\alpha})$. Thus $\beta=r$, which
completes the proof.

\end{proof}

\noindent {\bf Theorem  2.4.} \emph{Let $G$ be a solvable EPPO-group
such that $|\pi(G)|>1$ and $Q$ be a maximal normal $q$-subgroup of
$G$ for some $q\in \pi(G)$. Then}

(1) \emph{Suppose  that  $G_2$ is not a generalized quaternion group,
where $G_2$ is a Sylow $2$-subgroup of $G$. Then $G/Q$ is a
meta-cyclic  group. Let $|G|=p^{\alpha}q^{\beta}$. Then $G/Q$ is of
order $p^{\alpha}q^{\gamma}$ with $q^{\gamma}|(p-1)$ and the chief
series of $G$ is as follows:
$$\underbrace{q, \ldots, q}_{\gamma}; \underbrace{p, \ldots, p}_{\alpha}; q^{b_1}, \ldots, q^{b_k}; \ \ b|b_i, i=1, \ldots, k, \ \ \gamma<b,$$
where $p^{\alpha}|(q^b-1)$. If $Q$ is the Sylow $q$-subgroup of $G$,
then the chief series of $G$ is as follows:
$$\underbrace{p, \ldots, p}_{\alpha}; \underbrace{q^{b}, \ldots, q^b}_{k}, \ \ \beta=kb,$$
and the length of nilpotency class of $Q$ is bounded by $k$.}

(2) \emph{If the Sylow 2-subgroups of $G$ are generalized quaternion
groups, then $G$ has the following chief series
$$\underbrace{2, \ldots, 2}_{\alpha}; q^{b_1}, \ldots, q^{b_k},$$
where $b_i>1$, $b|b_i$ for $i=1, \ldots, k$, where  $b$ is the exponent of $q (mod \ 2^{\alpha-1})$}.

\begin{proof}
(1) Since $G$ is an EPPO-group, we have that
$|G|=p^{\alpha}q^{\beta}$, where $p, q\in \pi(G)$ and $p\neq q$. If
$Q$ is a Sylow $q$-subgroup of $G$, then $G/Q$ is of order
$p^{\alpha}$ and by Theorem 2.2, $G/Q$ is a cyclic group. If
$|Q|<q^{\beta}$, then $G/Q$ has a cyclic Sylow $p$-subgroup since
the Sylow $p$-subgroups of $G$ are cyclic. By our hypothesis, $G/Q$
has no normal Sylow $q$-subgroup, that is $O_q(G/Q)=1$. It follows
that there exists a normal $p$-subgroup of order $p$ in $G/Q$. Since
the automorphism group of a group of order $p$ is cyclic, we obtain
that the Sylow $q$-subgroups of  $G/Q$ are cyclic and therefore
$G/Q$ is a metacyclic group.

We first assume that $Q$ is a Sylow $q$-subgroup of $G$. By the
foregoing arguments, we see that $G$ has  a chief series
$$G=C_0>C_1>\cdots >C_{\alpha-1}>Q>C_{\alpha+1}>\cdots >C_{s-1}>C_s=1.$$
Suppose that $P$ is a Sylow $p$-subgroup of $G$. Set $G_1=PC_{s-1}$.
Then $C_{s-1}$ is a minimal normal subgroup of $G_1$. Otherwise, let
$1\neq C_t<C_{s-1}$ such that $C_t$ is normal in $G_1$. Since $Q$ is
normal in $G$, we get that $Z(Q)$ is normal in $G$.  Since $Z(Q)\cap
C_{s-1}\neq 1$, we have that $Z(Q)\cap C_{s-1}$ is normal in $G$ and
so by the minimality of $C_{s-1}$, we have that $C_{s-1}\leq Z(Q)$,
which implies that $C_t$ is normal in $Q$. Since $C_t$ is also
normalized by $P$ in $G_1$, we obtain that $C_t$ is normal in
$PQ=G$, a contradiction. Assume that $|C_{s-1}|=q^b$. Applying Lemma
2.4, we have that $b$ is the exponent of $q (mod \ p^{\alpha})$.
Considering the factor group $G/C_{s-1}$, we observe that  the chief
series of $G/C_{s-1}$ is
$$G/C_{s-1}>\cdots >C_{\alpha-1}/C_{s-1}>Q/C_{s-1}>C_{\alpha+1}/C_{s-1}>\cdots >C_{s-2}/C_{s-1}>1.$$
By induction, we obtain that the chief series of $G$ has the the
following type:
$$\underbrace{p, \ldots, p}_{\alpha}; \underbrace{q^{b}, \ldots, q^b}_{k}, \ \ \beta=kb,$$
where $b$ is the exponent of $q (mod \ p^{\alpha})$. Since $C_{s-1}\leq
Z(Q)$, the analogous argument induces that $C_{s-2}/C_{s-1}\leq
Z(G/C_{s-1})$. By induction, we conclude that $C_{i}/C_{i+1}$ is
contained in $Z(Q/C_{i+1})$ with $i=\alpha+1, \ldots, s-1$. It
follows that $Q>C_{\alpha+1}>\cdots >C_{s-1}>1$ is a central series
of $Q$ with length of $k$. Therefore the nilpotency class is bounded
by $k$.

Now, we may assume that $Q$ is not a Sylow $q$-subgroup of $G$. Then
$G/Q$ is a metacyclic group of order $p^{\alpha}q^{\gamma}$. Let
$H/Q$ be a Sylow $p$-subgroup of $G/Q$. Then $H/Q$ is normal in
$G/Q$. By Corollary 1.1, we have that $q^{\gamma}|(p-1)$ and $G$ has
a normal series $$G>H>Q>1,$$where the corresponding indices are
$q^{\gamma}$, $p^{\alpha}$, $q^{\beta-\gamma}$, respectively. We can
refine above normal series as $$G>\cdots >H>\cdots >Q>C_1>\cdots
C_k>1,$$where the orders of each chief factor is as
$$\underbrace{q, \ldots, q}_{\gamma}; \underbrace{p, \ldots, p}_{\alpha}; q^{b_1}, \ldots, q^{b_k}.$$
At last, we consider the following normal series of $H$:
$$H>\cdots >Q>C_1>\cdots C_k>1.$$
Observing that  $Q$ is a normal Sylow $q$-subgroup of $H$, we see
that the orders of chief factor of $H$ are $$p, \ldots, p; q^b,
\ldots, q^b.$$ Refining the chief series of $H$ above, we get that
$b|b_i$ for $1\leq i\leq k$.

Since $q^{\gamma}|(p-1)$ and $p^{\alpha}|(q^b-1)$, we have that
$$q^\gamma\leq p-1\leq p^{\alpha}-1<p^{\alpha}\leq q^b-1<q^b,$$and
so $\gamma<b$. (??)

(2) Let $P$ be a Sylow $2$-subgroup of $G$ and suppose that $P$ is a
generalized quaternion group. It follows from Theorem 2.3 that $G$
has a normal Sylow $q$-subgroup $Q$. Since $P$ has a cyclic subgroup
$K$ of order $2^{\alpha-1}$, we have that $H=KQ$ is normal in $G$,
and $$G>H>Q>1$$ is a normal series of $G$. Using a similar argument
as in (1), we conclude that $G$ has a chief series with every factor
having order as $$\underbrace{2, \ldots, 2}_{\alpha}; q^{b_1},
\ldots, q^{b_k},$$ where $b|b_i$ for $i=1, \ldots, k$ and $b$ is the exponent of $q (mod \ 2^{\alpha-1})$. Now we prove that each $b_i>1$ and
it suffices to prove $b_k>1$ by induction. Let $C_K$ be a minimal
normal subgroup of $G$ and assume that $|C_K|=q$. Write $L=PC_K$.
Since $L$ is an EPPO-group, we have that $C_K$ is centralized by
itself in $L$ and thus $L/C_K\simeq P$ is isomorphic to a subgroup
of $\mathrm{Aut}(C_K)$, contradicting that $\mathrm{Aut}(C_K)$ is
cyclic. Hence $b_k>1$ and the result follows.
\end{proof}

Notice that Theorem 2.4 is a refinement of Theorem 1 in \cite{Hig}.

\noindent {\bf Corollary 2.1.} \emph{Let $G$ be an EPPO-group. Then
$G$ is supersolvable if and only if $G$ has a normal subgroup of
order $q$ with $q\in \pi(G)$.}

\begin{proof}
It suffices to prove the sufficiency part.

Assume first that $q$ is an odd prime. By (2) in Theorem 2.3, $G$ is
a solvable group of order $p^{\alpha}q^{\beta}$. By Corollary 1.1,
$p^{\alpha}$ divides $q-1$, then the exponent of $q (mod \ p^{\alpha})$ is 1, and so the Sylow $p$-subgroups of $G$ are
cyclic. Let $|G/Q|=p^{\alpha}q^{\gamma}$. Then, by Theorem 2.4, we
have $\gamma=0$, which indicates that $Q$ is a normal Sylow
$q$-subgroup of $G$. Since $p^{\alpha}|(q-1)$, by Theorem 2.4 again,
$G$ has a chief series such that every factor has order as the
following:$$\underbrace{p, \ldots, p}_{\alpha}; \underbrace{q,
\ldots, q}_{\beta} \ ,$$implying that $G$ is supersolvable, as
wanted.

If $q=2$, then $G$ has a central element of order $2$ and it follows
that $G$ is a $2$-group by the hypothesis. Thus, the result is
clear.
\end{proof}

\section{Non-solvable EPPO-groups}

From the result of M. Suzuki (\cite{Su} Part 3, Theorem 5), we need only discuss the $ZT$-groups,  $G=PSL_2(q)$ with $q$ a Fermat primes or a Mersenne primes, $q=4, 9$, $PSL_3(4)$ and $M_9$.

\noindent {\bf Lemma 3.1.} \emph{A $ZT$-group $G$ is an EPPO-group
if and only if $G$ is isomorphic to $PSL_2(2^2)$, $PSL_2(2^3)$,
$Sz(2^3)$ or $Sz(2^5)$.}

\noindent {\bf Lemma 3.2.} \emph{Let $G=PSL_2(q)$ with $q$ a Fermat
primes or a Mersenne primes. Then $G$ is an EPPO-group if and only if
$q=5, 7, 17$.}

\noindent {\bf Lemma 3.3.} \emph{$PSL_2(9)$ and $M_9$ are
EPPO-groups.}

\noindent {\bf Lemma 3.4.} \emph{$PSL_3(4)$ is an EPPO-group.}

\noindent{\bf Remark 2.} Through concrete calculations we get Lemma 3.1 to Lemma3.4. Notice that the orders of cyclic subgroups of the above groups all are prime powers.

\noindent {\bf Theorem  3.1.} \emph{Suppose that $G$ is a
non-solvable EPPO-group. Then one of the following holds.}

(1) \emph{$G$ is isomorphic to $PSL_2(q)$ for $q=7, 9$ or $PSL_3(4)$
or $M_9$.}

(2) \emph{There exists a normal $2$-subgroup $T$ such that $G/T$ is
isomorphic to}

(i) \emph{$PSL_2(q)$ for $q=5, 8, 17$. In this case, the class
length of $T$ is not greater than 2 and $|T|$-1 is divisible by
$3\cdot 5$, $3^2\cdot 7$, $3^2\cdot 17$ respectively.}

(ii) \emph{$Sz(2^3)$ with $5\cdot 7\cdot 13|(|T|-1)$.}

(iii) \emph{$Sz(2^5)$ with $5^2\cdot 31\cdot 41|(|T|-1)$.}

\noindent {\bf Theorem  3.2.} \emph{Let $G$ be a non-abelian simple
EPPO-group. Then $G$ is one of the following groups:
$$A_5, PSL_2(7), PSL_2(8), PSL_2(9), PSL_2(17), PSL_3(4), Sz(2^3), Sz(2^5).$$}

\noindent {\bf Theorem  3.3.} \emph{Let $G$ be a group. Then
$G\simeq A_5$ if and only if there are at least 3 different primes
in $\pi(G)$ and the order of each non-trivial element in $G$ is a
prime.}

\noindent{\bf Remark 3.}  Let $G$ be a finite group. Then $G\simeq A_5$ if and only if $\pi_e(G) = \{1, 2, 3, 5\}$, where $\pi_e(G)$ denote the set of element orders of $G$.

\medskip

Acknowledgements. The authors are thankful to Dr. Jinbao Li for
reading this Chinese paper and translate it into English.







\end{document}